\documentclass[runningheads]{llncs}

\usepackage{amsmath}
\usepackage{amssymb}
\usepackage[T1]{fontenc}
\usepackage[utf8]{inputenc}
\usepackage{xcolor}
\usepackage{hyperref}
\hypersetup{
    colorlinks=true,
    linkcolor=blue,
    urlcolor=blue,
    citecolor=blue
}
\usepackage{verbatim}

\newcommand{\og}{\overrightarrow{G}}     
\newcommand{\ex}{\operatorname{ex}}

\begin{document}

\title{An Improved Upper Bound for the Tur\'an Number of the Hexagon}
\titlerunning{An Improved Upper Bound for $\ex(n,C_6)$}

\author{
Sandip Das\inst{1} \and
Sk Samim Islam\inst{1} \and
Aashirwad Mohapatra\inst{1} \and
Saumya Sen\inst{1}
}

\authorrunning{S. Das, S.S. Islam, A. Mohapatra, and S. Sen}

\institute{
Indian Statistical Institute, Kolkata, India\\
\email{
sandip.das.69@gmail.com,
samimislam08@gmail.com,
aashirwad\_r@isical.ac.in,
saumyasen72@gmail.com
}
}

\maketitle

\begin{abstract}
For a graph $F$, the Tur\'an number $\ex(n,F)$ is the maximum number of
edges in an $n$-vertex graph containing no  copy of $F$. Determining the
Tur\'an numbers of even cycles is a central problem in extremal graph theory
and remains open in general. For $C_6$, the best previous upper bound was
due to F\"uredi, Naor, and Verstra\"ete [\textit{Advances
in Mathematics, 2006}], who proved that, for sufficiently large positive integer $n$, $$
    \ex(n,C_6) \leq \lambda n^{4/3}+O(n)<0.6272 n^{4/3},
$$ where $\lambda$ is the real root of $
    16\lambda^3-4\lambda^2+\lambda-3=0$. We improve this bound by showing that, for sufficiently large positive integer $n$, $$
    \ex(n,C_6) \leq \alpha n^{4/3}+O(n)<0.6144 n^{4/3},
$$ where $\alpha$ is the unique real root of $
    4 \alpha^{3}
(3/2)^{1-1/(2\alpha)} =1$ in the interval $(1/2,2/3)$.

\keywords{Tur\'an number \and Hexagon-free graph \and Even cycle}
\end{abstract}

\section{Introduction}

For a graph $F$, the Tur\'an number $\ex(n,F)$ is the maximum number of
edges in an $n$-vertex graph containing no isomorphic  copy of $F$. This
function is one of the foundational topics of extremal graph theory. Tur\'an's
classical theorem ~\cite{Turan_classical} determines $\ex(n,K_r)$ exactly for every
complete graph $K_r$. More generally, Tur\'an-type problems for non-bipartite
forbidden graphs are comparatively well understood. This is particularly
striking for odd cycles.  Füredi and Gunderson~\cite{FZG2015},
building on earlier works of Bondy \cite{bondy1971large,bondy1971pancyclic}, Woodall \cite{woodall1972sufficient}, and Bollob\'as \cite{bollobas2004extremal}, described the extremal $C_{2k+1}$-free graphs for all $n$. In particular, for $n\ge 4k$,
$$
\ex(n,C_{2k+1})=\left\lfloor\frac{n^2}{4}\right\rfloor,
$$
and equality is attained only by $K_{\lfloor n/2\rfloor,\lceil n/2\rceil}$  up to isomorphism, i.e., the balanced complete bipartite graph is the unique extremal graph.

For even cycles, the corresponding extremal problems are substantially more difficult. Although $C_{2k}$ is one
of the simplest bipartite graphs, the exact value of $\ex(n,C_{2k})$ is not
known in general even for $C_4$ or $C_6$. The case of $C_4$ is nevertheless
understood asymptotically. Erd\H{o}s and R\'enyi~\cite{erdos_renyi_1962}, and
independently Brown~\cite{Brown1966}, constructed $C_4$-free graphs giving the
asymptotically sharp lower bound, while Erd\H{o}s, R\'enyi, and
S\'os~\cite{ERS1966} proved the corresponding upper bound. Together,
their results give
\[
    \ex(n,C_4) = \left(\frac{1}{2} + o(1) \right) n^{3/2}.
\]

For longer even cycles,  the  problem becomes substantially
more difficult. Bondy and Simonovits~\cite{BondySimonovits1974} proved that
$$
\ex(n,C_{2k})=O\bigl(n^{1+1/k}\bigr)
$$
for every fixed $k\ge 2$, establishing the exponent $1+1/k$ on the
upper-bound side. Despite these general upper bounds, matching lower bounds of order
$n^{1+1/k}$ are known only for $k\in\{2,3,5\}$, through the constructions
based on finite geometries~\cite{Brown1966,LUW1994,FNV2006}; see also
the survey~\cite{FurediSimonovits2013}.

The asymptotic formula for $C_4$ led Erd\H{o}s and
Simonovits~\cite{ErdosSimonovits1982} to conjecture that the same leading
constant should persist for every fixed even cycle:
\begin{equation}\label{eq:ES}
\ex(n,C_{2k})
=\left(\frac12+o(1)\right)n^{1+1/k},
\qquad k\ge 2.
\end{equation}

The conjecture turned out to be false. Lazebnik, Ustimenko, and
Woldar~\cite{LUW1994} first disproved it for $k=5$ by constructing
$C_{10}$-free graphs satisfying
$$
\ex(n,C_{10})
\ge
\left(4\cdot 5^{-6/5}+o(1)\right)n^{6/5}=
\left(0.5798\ldots+o(1)\right)n^{6/5}
$$
along an infinite sequence of values of $n$. Thus the constant $1/2$ in
\eqref{eq:ES} is already not correct for $C_{10}$.

The next important case was the hexagon.  Füredi, Naor, and
Verstra\"ete~\cite{FNV2006} disproved the $k=3$ instance of
\eqref{eq:ES}. They constructed, for infinitely many $n$, $C_6$-free graphs with more than $0.5338n^{4/3}$ edges. Thus, $\ex(n,C_6)>0.5338n^{4/3}$ for infinitely many $n$, which is the best known lower bound. In the same paper they proved the upper bound
\begin{equation}\label{eq:FNVbound}
\ex(n,C_6)
\le
\lambda n^{4/3} + O(n),
\end{equation}
where $\lambda$ is the real root of $16\lambda^3-4\lambda^2+\lambda-3=0$. Since $\lambda=0.627110\ldots$, this gives $\ex(n,C_6)<0.6272n^{4/3}$ for all sufficiently large $n$. This has remained the best known upper bound for the Tur\'an number of $C_6$.

In this paper, we improve the above mentioned upper bound for $C_6$ by lowering the leading coefficient.

\begin{theorem}\label{thm:main} For sufficiently large positive integer $n$, $$
    \ex(n,C_6) \leq \alpha n^{4/3}+O(n)<0.6144 n^{4/3},
$$ where $\alpha$ is the unique solution of $
    4 \alpha^{3}
(3/2)^{1-1/(2\alpha)} = 1$ in the interval $(1/2,2/3)$.

\end{theorem}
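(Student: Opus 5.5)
We follow the Füredi–Naor–Verstraëte framework and improve its counting step. Let $G$ be a $C_6$-free graph on $n$ vertices with $e(G)=\alpha' n^{4/3}$ edges, and suppose $\alpha'>\alpha$. By the standard reduction (Erdős–Simonovits type regularization), we may pass to a bipartite subgraph. More precisely, we first delete low-degree vertices to get minimum degree comparable to the average degree $d=2\alpha' n^{1/3}$, losing only $O(n)$ edges. We then use the fact that $C_6$-freeness forces strong local structure, namely the lemma of Füredi–Naor–Verstraëte that a $C_6$-free graph contains a $C_4$-free subgraph with at least half its edges. In this reduction it is the $\Theta$-graph structure that matters: a $C_6$-free graph contains no $\Theta_{3,3,3}$-like configurations, and triangles and $C_4$'s only add lower-order terms. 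So the plan is to fix a vertex $v$ and analyze the breadth-first layers $N_1(v)$, $N_2(v)$, $N_3(v)$ under the constraint that no hexagon passes through $v$.

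The key steps are as follows.

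First, we show that $N_1$ and $N_2$ span trees-like structure: every vertex of $N_2$ has few neighbours in $N_1$ (at most $2$ typically, else a $C_6$ through $v$ arises via $C_4$'s). Hence $|N_2|\approx d^2$ up to controlled multiplicities.

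Second, we count edges between $N_2$ and $N_3$. Here the exact $C_6$-freeness constraint says that for $x,y\in N_2$ with disjoint "ancestor branches", $x$ and $y$ share no common neighbour in $N_3$ together with the path structure. This is the ingredient behind FNV's cubic $16\lambda^3-4\lambda^2+\lambda-3=0$. Equivalently, the bipartite graph between $N_2$ and $N_3$, once coloured by the $d$ branches $N_1$, has every $N_3$-vertex adjacent to essentially one branch, up to bounded exceptions.

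Third, instead of bounding the branch distribution by a simple averaging (which yields the cubic), we optimize over the distribution of how many $N_2$-neighbours each $N_3$-vertex receives per branch. The factor $(3/2)^{1-1/(2\alpha)}$ strongly suggests an entropy- or convexity-type estimate. If the vertices of $N_3$ with exactly $t$ neighbours in $N_2$ are weighted, the number of length-$3$ walks is bounded by a product $\prod (\cdot)^{p_t}$. AM–GM then gives a geometric mean, where $1/(2\alpha)$ arises as the fraction of $N_3$-edges that are "forced" (degree ratio $n^{1/3}/d$), and $3/2$ comes from the extremal local configuration in which a vertex of $N_3$ sees $3$ versus $2$ branch-neighbours. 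We will therefore count paths of length $3$ from each vertex. These number $\sum_v \sum_{u\sim v}(d(u)-1)\cdot\ldots\ge n d^3(1-o(1))$ by convexity, and we bound them above by $n\cdot|N_{\le 3}|$ times the multiplicity factor. This gives $d^3 \cdot \mathrm{(correction)} \le 8n\cdot\ldots$, i.e.\ $(2\alpha')^3 (3/2)^{1-1/(2\alpha')}/2 \le 1$, which rearranges to $4\alpha'^3(3/2)^{1-1/(2\alpha')}\le 1$. Since the function is increasing on $(1/2,2/3)$, we get $\alpha'\le\alpha$. The numerical bound $0.6144$ then follows by evaluating the equation.

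The main obstacle is the third step: proving the multiplicative (entropy-type) bound on the $N_2$–$N_3$ edge distribution with exactly the constant $3/2$ and exponent $1-1/(2\alpha)$. We will first try a weighted Jensen/Hölder argument on $\log$ of the multiplicities, combined with the FNV structural lemma that each $N_3$-vertex has at most a bounded number of branch-neighbours. Only bounded exceptional sets contribute the $O(n)$ term.
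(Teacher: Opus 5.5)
You have a genuine gap, and it sits exactly where you locate the main obstacle: Step 3 reverse-engineers the target constant rather than deriving it. The counting you actually describe (3-paths out of each vertex, with each endpoint reached by at most about two of them) yields only $4\alpha^3\le 1$, i.e.\ $\alpha\le 4^{-1/3}\approx 0.630$. The entire improvement lies in the factor $(3/2)^{1-1/(2\alpha)}$, and for it you offer only identifications with no mechanism behind them: ``$1/(2\alpha)$ is the degree ratio $n^{1/3}/d$'' and ``$3/2$ comes from an $N_3$-vertex seeing 3 versus 2 branch-neighbours''. Read literally, the second conflicts with your own Step 2, which correctly shows that an $N_3$-vertex with two or more $N_2$-neighbours sees only one branch. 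The concluding inequality $(2\alpha')^3(3/2)^{1-1/(2\alpha')}/2\le 1$ is asserted, not derived.

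Step 1's justification is also wrong. $K_{2,t}$ is $C_6$-free, so a vertex of $N_2$ can have many neighbours in $N_1$ without creating a hexagon. Your opening reduction also points the wrong way. Passing to a bipartite subgraph, or to a $C_4$-free subgraph with half the edges, loses a factor $2$ in $m$, which no sharp-constant argument can afford. Moreover, the gain in the actual proof comes precisely from $G$ being far from bipartite.

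The missing ideas are these.
\begin{itemize}
\item The exponent is $1-b(G)/m$, where $b(G)$ is the maximum cut. The bipartite estimate $\ex(n_1,n_2,C_6)<2^{1/3}(n_1n_2)^{2/3}+16(n_1+n_2)$ gives $b(G)\le\frac12 n^{4/3}+16n$, so $b(G)/m\lesssim 1/(2\alpha)$ when $m\ge\alpha n^{4/3}$.
\item The base $3/2$ is $1+t$ at $t=1/2$ in the inequality $(1-t)\log(1+t)\ge\log(3/2)\min\{t,1-t\}$. It is applied with $t=d_v^-/d_v$ at each vertex of an orientation $\overrightarrow{G}$.
\item The quantity counted is $s(\overrightarrow{G})=\sum_{u\to v}(d_u+d_u^--1)(d_v-1)$. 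It exceeds $p_3(G)+f(\overrightarrow{G})$ (3-paths plus \emph{forward} paths) by at most $6T\le 2\Delta m$.
\item For the upper bound, the FNV estimates give $p_3+u\le n(n-1)+35\Delta m$, with $u$ the number of pairs joined by exactly one 3-path. They also give an orientation with $f\le u+43\Delta m$. Hence $s\le n(n-1)+80\Delta m$.
\item For the lower bound, apply AM--GM to the logarithms and regroup by vertex. Then use the log inequality, Jensen for $x\log(x-1)$, and $\sum_v\min\{d_v^-,d_v^+\}\ge m-b(G)$. The last holds because the total imbalance $\sum_{v\in I}(d_v^+-d_v^-)$, with $I=\{v:d_v^+>d_v^-\}$, is at most the size of the cut $(I,V\setminus I)$. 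Together these give $s\ge m(2m/n-1)^2(3/2)^{1-b(G)/m}$.
\item Comparing the two bounds with $m=\alpha n^{4/3}$ gives $4\alpha^3(3/2)^{1-1/(2\alpha)}\le 1+o(1)$.
\item The error $80\Delta m$ is $O(n^{5/3})$ only if $\Delta=O(n^{1/3})$. The paper gets this by passing to an induced subgraph maximizing $|E|-\alpha|V|^{4/3}-c|V|$, so that $\delta\gtrsim\alpha n^{1/3}$, and then using $\Delta(\delta-4)^2\le 64n$.
\end{itemize}
None of this machinery appears in your plan.
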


\section{Preliminaries}
\label{sec:preliminaries}

We begin with some notation and terminology that will be used throughout the paper.

Let $G=(V,E)$ be a simple graph and let $\pi=\{x,y\}\in\binom{V}{2}$. We write $C(\pi)$ for the number of paths of length three with end-vertices $x$ and $y$. Suppose that $C(\pi)=k\geq 2$, and denote these paths by $P_1,P_2,\ldots,P_k$. Let $\Pi$ consist of all pairs $\pi\in\binom{V}{2}$ such that either $C(\pi)\geq 2$ and $\pi\in E$, or $C(\pi)\geq 3$. We call $\pi$ \emph{degenerate} if the distance between $x$ and $y$ in $P_1\cup P_2\cup\cdots\cup P_k$ is two. Otherwise, $\pi$ is called \emph{non-degenerate}. Let $\Pi^{1}$ consist of all degenerate pairs $\pi\in\binom{V}{2}$.

Consider an orientation of $G$. A path $P$ of length three between $x$ and $y$ is called a \emph{forward path} if $P$ contains a directed path of length two starting at either $x$ or $y$. For a given orientation of $G$, let $\Pi^{2}$ consist of all pairs $\pi$ such that $C(\pi) \geq 2$ and at least one of the $C(\pi)$ length-three paths joining $\pi$ is a forward path.

To prove our result, we need some bounds on the degrees of a $C_6$-free graph and on the number of paths of length three joining pairs of vertices. The following results of F\"uredi, Naor, and Verstra\"ete \cite{FNV2006} give the bounds that we need. The first result relates the minimum and maximum degrees, while the second bounds the total contribution of the pairs in $\Pi \cup \Pi^{1}$. The third result shows the existence of an orientation that bounds the possible three length paths between pair of vertices in $\Pi^2$. We collect these results in the following lemma.

\begin{lemma}[\cite{FNV2006}]
\label{lem:FNV_collection}
Let $G = (V,E)$ be a $C_6$-free graph of minimum degree $\delta$ and maximum degree $\Delta$. Then the following hold:
\begin{itemize}
    \item[$(a)$] $\Delta (\delta - 4)^2 \leq 64 |V|$.
    
    \item[$(b)$] $\sum_{\pi \in \Pi \cup \Pi^{1}} C(\pi) \leq 35 \Delta |E|$.

    \item[$(c)$] There is an orientation of $G$ such that $\sum_{\pi \in \Pi^{2}} C(\pi) < 43 \Delta |E|$.  
\end{itemize}

\end{lemma}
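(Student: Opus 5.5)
Since this lemma is quoted from F\"uredi, Naor, and Verstra\"ete~\cite{FNV2006}, the plan is to reconstruct their three arguments. All three rest on one structural principle: in a $C_6$-free graph, the local configurations that could create many length-three paths between a pair are rigid. I would prove the parts in the order (a), (b), (c).

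For $(a)$, fix a vertex $v$ of degree $\Delta$ and grow breadth-first layers $N_1=N(v)$, $N_2$, $N_3$.
\begin{itemize}
\item \emph{Second layer.} Every $a\in N_1$ has at least $\delta-4$ neighbours in $N_2$ after discarding a bounded number of neighbours inside $N_1$. Any two $a,a'\in N_1$ joined by two internally disjoint $a$--$a'$ paths of length two through $N_2$ would create a hexagon through $v$. So the bipartite graph $B_1=(N_1,N_2)$ contains no $C_4$ with two vertices in $N_1$ whose $N_2$-vertices have further structure. More usefully, it contains no path of length four starting and ending in $N_1$ and avoiding a fixed vertex.
\item \emph{Sparsity of $B_1$.} From this I would use the standard lemma that a bipartite graph with no such path has at most a constant times $|N_1|+|N_2|$ edges. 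This gives $|N_2|\ge \Delta(\delta-4)/c$.
\item \emph{Third layer.} Repeat the argument between $N_2$ and $N_3$, using that each vertex of $N_2$ again sends at least $\delta-4$ edges forward. This gives $n\ge |N_3|\ge \Delta(\delta-4)^2/64$ once the constants are tracked.
\end{itemize}
The delicate point is the second layer. Vertices of $N_2$ adjacent to several vertices of $N_1$ must be handled by the theta-graph obstruction $\theta(1,3,3)$ / $C_6$ rather than by naive counting.

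For $(b)$, I would analyse a pair $\pi=\{x,y\}$ with $k=C(\pi)$ paths $x a_i b_i y$. Because the union of two paths with disjoint interiors is a $C_6$, any two of the paths must share an interior vertex. A sunflower-type argument should then show the following.
\begin{itemize}
\item Either all paths pass through a common vertex, say all $a_i=a$, so that $d(x,y)\le 2$ in the union and $\pi$ is degenerate,
\item or $k$ is bounded, namely $k\le 2$ when $xy\notin E$.
\end{itemize}
For pairs in $\Pi\cup\Pi^{1}$, I would charge each path $x a b y$ to the edge $ab$ (or $xa$) together with the remaining endpoint. Each edge receives charges from at most $O(\Delta)$ paths, since the endpoint is a neighbour of an endpoint of the edge. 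Summing the constants over the finitely many configuration types gives $35\Delta|E|$.

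For $(c)$, I would choose the orientation to destroy forward paths. A natural first attempt is an acyclic orientation by a degree ordering, or a random orientation followed by an expectation bound. Under a random orientation, a fixed length-three path is forward with probability $1/2$, so the issue is only pairs with $C(\pi)=2$ where one of the two paths is forward. For those pairs, I would charge the forward directed two-path starting at $x$ or $y$ to its middle edge. I would then show each directed edge is charged $O(\Delta)$ times, using the same rigidity from $(b)$. Derandomizing, or choosing the orientation greedily, yields $43\Delta|E|$.

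The main obstacle I expect is $(a)$: obtaining the square $(\delta-4)^2$ requires genuinely controlling the expansion from the second to the third layer in a $C_6$-free graph, not just the first step.
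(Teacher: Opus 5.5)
The paper does not prove this lemma; it imports it from \cite{FNV2006}. For the way the paper uses it, only orders of magnitude matter ($\Delta=O(n^{1/3})$ from $(a)$, and an $O(\Delta|E|)$ error from $(b)$ and $(c)$), so the constants $64,35,43$ are immaterial. Your outline has genuine gaps in each part.

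In $(b)$ you rightly note that the interiors $\{a_i,b_i\}$ of the paths $x\,a_i\,b_i\,y$ must pairwise intersect, but both branches of your dichotomy are wrong. A pair is degenerate exactly when some $a_i=b_j$, since then $x\,a_i\,y$ is a $2$-path in the union. If no $a_i$ equals any $b_j$, pairwise intersection forces all paths to share one $a$ or one $b$. So the ``common vertex'' case is the \emph{non-degenerate} case: $x,y$ stay at distance $3$ in the union. There $k$ is unbounded: a pendant edge $xa$ attached to $K_{2,k}$ with small side $\{a,y\}$ is $C_6$-free and has $C(\{x,y\})=k$. Degenerate pairs can also have unbounded $k$: glue a triangle $x\,u\,c$ at $c$ to a book of triangles on the edge $cy$. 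These are exactly the pairs $\sum_{\Pi\cup\Pi^1}C(\pi)$ must count, and your charging sentence does not count them. Fixing the edge $ab$ and the endpoint $x\in N(a)$ still leaves up to $\Delta$ choices of $y$, so you only get $O(\Delta^2)$ per edge. To do better you need a rigidity statement such as ``for each edge $ab$ only $O(1)$ vertices $y\in N(b)$ satisfy $|N(a)\cap N(y)|\ge 3$.'' Indeed, two such $y,y'$, with further common neighbours $c$ of $a,y$ and $c'$ of $a,y'$ chosen distinct, give the hexagon $y\,b\,y'\,c'\,a\,c$. You also need a separate count for degenerate pairs.

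In $(a)$, two internally disjoint $a$--$a'$ paths through $N_2$ only produce a $K_{2,3}$, which is $C_6$-free. The claim of ``at least $\delta-4$ neighbours in $N_2$'' holds only on average: $G[N_1]$ is $P_5$-free, hence has at most $\frac32|N_1|$ edges by Erd\H{o}s--Gallai, but it may contain large stars. With that correction, your path-of-length-four observation does handle $N_1\to N_2$. The step $N_2\to N_3$ does not simply repeat, however. A path $w\,z\,w'$ with $z\in N_3$ closes a hexagon through $v$ only if $w,w'$ have distinct neighbours in $N_1$. You would also first need $e(N_1,N_2)+2e(N_2)=O(|N_2|)$ to get $\delta-O(1)$ forward edges from $N_2$ on average. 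You flag this as the main obstacle, but it is left open.

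In $(c)$ the random-orientation route fails outright, and the whole difficulty is constructing a specific orientation, which you do not do. Take the point--line incidence graph of a generalized quadrangle of order $q$, and split each point $p$ into two copies $p_1,p_2$, each joined to every line through $p$. Projecting a cycle to the incidence graph (girth $8$) shows that a $6$-cycle would need three copies of one point, so this graph is $C_6$-free. It has $n\approx 3q^3$, $|E|\approx 2q^4$ and $\Delta\approx 2q$, so $\Delta|E|=\Theta(n^{5/3})$. For a line $\ell'$ and a point $r\notin\ell'$, let $p\in\ell'$ and $\ell\ni r,p$ be given by the quadrangle axiom. Then $\{r_i,\ell'\}$ is a non-adjacent, non-degenerate pair whose only length-three paths are $r_i\,\ell\,p_1\,\ell'$ and $r_i\,\ell\,p_2\,\ell'$. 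There are $\Theta(q^6)=\Theta(n^2)$ such pairs. A uniformly random orientation gives $r_i\to\ell\to p_1$ with probability $1/4$, so the expectation of $\sum_{\pi\in\Pi^2}C(\pi)$ is $\Theta(n^2)$. No derandomization can recover $O(\Delta|E|)$ from this. An $O(\Delta)$ per-edge charging bound fails for the same reason, since $\Theta(q^2)$ of these pairs route their paths through the edge $\ell p_1$. In this example, orienting every edge from points to lines kills all these forward paths (orienting from lower to higher degree happens to produce it). Nothing in your sketch shows how to obtain such an orientation in a general $C_6$-free graph, and that is what the cited argument must supply.
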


The corresponding bipartite extremal problem is closely related to the hexagon problem. Such bipartite
estimates are particularly relevant to the ordinary Tur\'an problem, since a maximum cut extracts a large bipartite subgraph from an arbitrary graph. Let $\ex(n_1,n_2,C_6)$ denote the maximum number of edges in a $C_6$-free bipartite graph whose two partite sets have sizes $n_1$ and $n_2$. The following result of F\"uredi, Naor, and Verstra\"ete \cite{FNV2006} gives the upper bound that we need.

\begin{lemma}[\cite{FNV2006}]
\label{lem:FNV_thm1.2}
Let $n_1,n_2 \in \mathbb{N}$. Then $$\ex(n_1,n_2,C_6) < 2^{1/3} (n_1n_2)^{2/3} + 16 (n_1+n_2).$$
\end{lemma}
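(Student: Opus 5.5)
The plan is to double count paths of length three between the two sides, in the style of Bondy--Simonovits and F\"uredi--Naor--Verstra\"ete. Let $G$ be a $C_6$-free bipartite graph with parts $A$, $B$ of sizes $n_1$, $n_2$ and $e$ edges. Every path of length three in $G$ joins a vertex of $A$ to a vertex of $B$. So the total number $P_3$ of such paths equals $\sum_{\pi}C(\pi)$, summed over the $n_1n_2$ pairs $\pi=\{a,b\}$ with $a\in A$ and $b\in B$.

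\textbf{Step 1 (cleaning).} We may assume $e\ge 2^{1/3}(n_1n_2)^{2/3}+16(n_1+n_2)$, since otherwise there is nothing to prove. Repeatedly delete any vertex whose degree is below a threshold of the order of the average degree (say a constant fraction of $e/n_1$ on $A$ and of $e/n_2$ on $B$, or simply degree below some absolute constant). We lose at most a linear number of edges, and this loss is absorbed into the $16(n_1+n_2)$ term. We arrive at a nonempty $C_6$-free subgraph $G'$ with large minimum degree $\delta$. Then Lemma~\ref{lem:FNV_collection}$(a)$ gives $\Delta\le 64(n_1+n_2)/(\delta-4)^2$, which makes $\Delta$ small relative to the average degree.

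\textbf{Step 2 (lower bound).} By the bipartite Sidorenko-type inequality for the path with three edges, the number of walks of length three satisfies $\sum_{uv\in E}d(u)d(v)\ge e^3/(n_1n_2)$. This follows from Cauchy--Schwarz/H\"older applied to $\sum_{uv}d(u)d(v)$. Subtracting the walks that are not paths, i.e.\ the terms involving the correction $d(u)+d(v)-1$ per edge, gives $P_3\ge e^3/(n_1n_2)-O(\Delta e)$.

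\textbf{Step 3 (upper bound).} For a pair $\pi\notin\Pi\cup\Pi^1$ we have $C(\pi)\le 2$, and $C(\pi)\le 1$ if $\pi\in E$. This is just the definition of $\Pi$. Summing, $P_3\le 2n_1n_2+e+\sum_{\pi\in\Pi\cup\Pi^1}C(\pi)\le 2n_1n_2+e+35\Delta e$ by Lemma~\ref{lem:FNV_collection}$(b)$. Combining with Step 2 yields $e^3\le 2(n_1n_2)^2+O(\Delta e\, n_1n_2)$, hence $e\le 2^{1/3}(n_1n_2)^{2/3}\bigl(1+O(\Delta e\,/(n_1n_2))\bigr)$. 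The error term is of order $\Delta(n_1n_2)^{-1/3}$ times $e$. We want this to be $O(n_1+n_2)$, and we would verify it using the bound on $\Delta$ from Step 1.

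\textbf{Main obstacle.} The hard part is controlling the error terms uniformly, including the very unbalanced regime, so that they fit inside the explicit constant $16(n_1+n_2)$. When $n_1\ll n_2$, say $n_2\ge n_1^2$, the main term $(n_1n_2)^{2/3}$ is at most of order $n_2$. I would therefore first dispose of that regime directly: a $C_6$-free (indeed $C_4$-free-type) counting shows $e\le n_2+O(n_1^2)$ trivially, which gives the linear term. In the remaining range I would choose the deletion threshold in Step 1 proportional to $(n_1n_2)^{1/3}$. Then Lemma~\ref{lem:FNV_collection}$(a)$ gives $\Delta=O\bigl((n_1+n_2)/(n_1n_2)^{2/3}\bigr)$, and the $35\Delta e$ correction costs only $O(n_1+n_2)$ edges. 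The remaining routine work is tracking the constants to reach $16$.
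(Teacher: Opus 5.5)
There is a genuine gap, at exactly the step you flag as the main obstacle: nothing in your outline controls $\Delta$ at the required scale. (The paper quotes this lemma from \cite{FNV2006} without proof, so I judge your outline on its own.) Steps~2--3 are fine. They give $e^3\le 2(n_1n_2)^2+O(\Delta e\,n_1n_2)$, i.e.\ $e\le 2^{1/3}(n_1n_2)^{2/3}+O\bigl(\Delta(n_1n_2)^{1/3}\bigr)$. So a linear error term needs $\Delta=O\bigl((n_1+n_2)(n_1n_2)^{-1/3}\bigr)$; for $n_1=n_2=n$ this means $\Delta$ within a constant factor of the average degree $\approx 2^{1/3}n^{1/3}$.

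Lemma~\ref{lem:FNV_collection}(a) gives that only if $\delta=\Omega(n^{1/3})$, and no deletion threshold achieves this cheaply. Removing vertices of degree below $\varepsilon$ times the average degree can cost up to $2\varepsilon e$ edges, a constant fraction of $e$ that changes the leading constant. An absolute-constant threshold costs only $O(n)$, but then Lemma~\ref{lem:FNV_collection}(a) says only $\Delta=O(n)$. Your concrete threshold $(n_1n_2)^{1/3}$ is worse: for $n_1=n_2=n$ it is $n^{2/3}$, far above the average degree. The bound you derive, $\Delta=O\bigl((n_1+n_2)(n_1n_2)^{-2/3}\bigr)=O(n^{-1/3})$, simply says nothing survives the deletion.

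Even a loss-free device (a bipartite analogue of the $\Psi$-maximizing subgraph of Lemma~\ref{lem:induced_subgraph}) would not rescue the range $n_1\ll n_2\le n_1^2$. There the larger side has average degree only about $(n_1^2/n_2)^{1/3}$. Lemma~\ref{lem:FNV_collection}(a) sees a single global minimum degree, so it yields at best $\Delta\lesssim n_2^{5/3}n_1^{-4/3}$, a factor $n_2/n_1$ too large. Finally, with the constants $64$ and $35$ the linear coefficient comes out far above $16$ even in the balanced case, so that part is not routine bookkeeping.

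Your unbalanced shortcut is also wrong as stated. $K_{2,n_2}$ is $C_6$-free with $2n_2$ edges, so $e\le n_2+O(n_1^2)$ fails once $n_2\gg n_1^2$. No $C_4$-type count is available either, because $C_6$-free graphs may contain many $4$-cycles.

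The idea you are missing is that in the bipartite setting $\Delta$ is not needed at all.
\begin{itemize}
\item Two $3$-paths joining $x\in A$ and $y\in B$ that share no internal vertex close a $C_6$. So whenever $C(\pi)\ge 2$, all these paths pass through one vertex $z$ adjacent to one endpoint, and $C(\pi)\le |N(z)\cap N(w)|$, where $w$ is the other endpoint.
\item For fixed $z$, the sets $N(z)\cap N(w)$ of size at least $3$ are pairwise disjoint. Indeed, if $a\in N(z)\cap N(w)\cap N(w')$, choose $a_1\in N(z)\cap N(w)$ and $a_2\in N(z)\cap N(w')$ with $a,a_1,a_2$ distinct; then $z\,a_1\,w\,a\,w'\,a_2$ is a $C_6$.
\item There are at most $d(z)$ choices for the endpoint adjacent to $z$, and the disjointness gives $\sum_w|N(z)\cap N(w)|\le d(z)$ over such $w$. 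Hence $\sum_{C(\pi)\ge 3}C(\pi)\le\sum_z d(z)^2=\sum_{ab\in E}(d(a)+d(b))$.
\item Since $P_3=\sum_{ab\in E}(d(a)-1)(d(b)-1)\le 2n_1n_2+\sum_{C(\pi)\ge3}C(\pi)$, you get $\sum_{ab\in E}(d(a)-2)(d(b)-2)\le 2n_1n_2+3e$.
\item Delete vertices of degree at most $3$, losing at most $3(n_1+n_2)$ edges. Then apply the AM--GM/Jensen argument from the proof of Lemma~\ref{lem:oriented_lb} on each side to $t\log(t-2)$, which is convex on $[4,\infty)$. This gives $e(e/n_1-2)(e/n_2-2)\le 2n_1n_2+3e$ for the cleaned graph.
\end{itemize}
Solving this yields $e\le 2^{1/3}(n_1n_2)^{2/3}+O(n_1+n_2)$ uniformly in $n_1,n_2$, with no case split. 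My own rough arithmetic puts the linear coefficient well below $16$.
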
 

\section{Proof of Theorem \ref{thm:main}}
\label{sec:proof}

Let $G = (V,E)$ be a simple graph. Let $n = |V|$ and $m = |E|$. Let $\delta$ and $\Delta$ denote the minimum and maximum degrees of $G$, respectively.
Let $b(G)$ be the maximum number of edges in a cut of $G$. Let $p_3(G)$ be the number of simple paths of length three, and let $u(G)$ be the number of unordered vertex pairs joined by exactly one such path.  

Let $\og$ be an orientation of $G$ and let $f(\og)$ denote the number of forward paths. For every vertex $v\in V$, let $N(v)=\{u\in V:uv\in E\}$ denote the neighborhood of $v$ in $G$, and let $d_v=|N(v)|$ denote its degree. Let $d^{-}_v$ and $d^{+}_v$ denote the in-degree and out-degree of $v$ in $\og$, respectively. Here $d_v=d^{-}_v+d^{+}_v$ 
and define
$$
    s(\og)=\sum_{u \to v }(d_u+d^{-}_u-1)(d_v-1),
$$
where the sum ranges over the directed edges of $\og$.

\begin{lemma} \label{lem:consequences_of_FNV}
If $G$ is $C_6$-free, then the following statements hold:

\begin{itemize}

\item[(a)] $b(G)\leq \frac{1}{2} n^{4/3} + 16 n$.

\item[(b)] $p_3(G) + u(G) \leq n(n-1) + 35 \Delta m$.  

\item[(c)] There is an orientation $\og$ of $G$ such that $f(\og)\leq u(G)+43\Delta m$. Consequently, $p_3(G) + f(\og) \leq n(n-1) + 78 \Delta m$ and $s(\og)\leq n(n-1)+80\Delta m$.
\end{itemize}

\end{lemma}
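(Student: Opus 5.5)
The plan is to derive each part from Lemmas~\ref{lem:FNV_thm1.2} and~\ref{lem:FNV_collection} by elementary double counting. Throughout I use that every path of length three has two distinct end-vertices, so $p_3(G)=\sum_{\pi\in\binom{V}{2}} C(\pi)$.

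\emph{Part (a).} Take a maximum cut of $G$, with sides of sizes $n_1+n_2=n$. The crossing edges form a $C_6$-free bipartite graph. By Lemma~\ref{lem:FNV_thm1.2}, $b(G)<2^{1/3}(n_1n_2)^{2/3}+16n$. Since $n_1n_2\le n^2/4$, we get $2^{1/3}(n^2/4)^{2/3}=2^{1/3}2^{-4/3}n^{4/3}=\tfrac12 n^{4/3}$, which is the claim.

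\emph{Part (b).} I split $p_3(G)+u(G)=\sum_{\pi}\bigl(C(\pi)+[C(\pi)=1]\bigr)$ according to whether $\pi\in\Pi\cup\Pi^1$.
\begin{itemize}
\item A pair with $C(\pi)\le 1$ is not in $\Pi\cup\Pi^1$, since membership needs $C(\pi)\ge 2$. It contributes at most $2$.
\item A pair with $C(\pi)\ge 2$ outside $\Pi$ has $C(\pi)=2$ exactly. It also contributes at most $2$.
\item A pair in $\Pi\cup\Pi^1$ has $C(\pi)\ge2$, so its indicator term vanishes. By Lemma~\ref{lem:FNV_collection}(b) these pairs contribute at most $35\Delta m$ in total.
\end{itemize}
Hence $p_3(G)+u(G)\le 2\binom n2+35\Delta m=n(n-1)+35\Delta m$.

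\emph{Part (c).} Take the orientation from Lemma~\ref{lem:FNV_collection}(c). Write $f(\og)$ as a sum over pairs of the number of forward paths joining the pair.
\begin{itemize}
\item Pairs with $C(\pi)=1$ contribute at most $1$ each, so at most $u(G)$ in total.
\item Pairs with $C(\pi)\ge2$ contribute nothing unless some joining path is forward. In that case $\pi\in\Pi^2$ and it contributes at most $C(\pi)$, so these pairs give less than $43\Delta m$ in total.
\end{itemize}
Thus $f(\og)\le u(G)+43\Delta m$. Adding this to (b) gives $p_3(G)+f(\og)\le n(n-1)+78\Delta m$.

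For $s(\og)$, I interpret each directed edge $u\to v$ as the middle edge of a walk $w\,u\,v\,z$. The term $(d_u-1)(d_v-1)$ counts the walks with $w\in N(u)\setminus\{v\}$ and $z\in N(v)\setminus\{u\}$. These are exactly the length-three paths with middle edge $uv$, plus the degenerate walks with $w=z$, which are triangles through $uv$. Every length-three path has a unique middle edge, so summing gives $p_3(G)$ plus at most $\Delta m$, because each edge lies in at most $\Delta$ triangles.

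The term $d^-_u(d_v-1)$ counts walks $w\to u\to v$, $z$. Here $w\ne v$ automatically, since $u\to v$ forbids $v\to u$. This is the step needing care: I must check that each forward path is counted exactly once.
\begin{itemize}
\item A directed $2$-path starting at an end-vertex of $P=w\,u\,v\,z$ must use the first two edges from that end. So it is either $w\to u\to v$ or $z\to v\to u$.
\item With the middle edge oriented $u\to v$, only the first option is possible.
\item So a forward path with middle edge oriented $u\to v$ is counted exactly once, in the term for $u\to v$ with $w$ an in-neighbour of $u$. The case of the opposite orientation is symmetric.
\end{itemize}
Hence $\sum d^-_u(d_v-1)\le f(\og)+\Delta m$, again absorbing the triangle walks with $w=z$. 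Altogether $s(\og)\le p_3(G)+f(\og)+2\Delta m\le n(n-1)+80\Delta m$.
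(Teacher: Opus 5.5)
Your proposal is correct and follows the paper's proof essentially step for step. It uses the same max-cut application of Lemma~\ref{lem:FNV_thm1.2}, the same pair-by-pair accounting against Lemma~\ref{lem:FNV_collection}(b),(c), and the same middle-edge split $s(\og)=\sum_{u\to v}(d_u-1)(d_v-1)+\sum_{u\to v}d^-_u(d_v-1)$, with each forward path counted exactly once. The only cosmetic difference is that you bound the degenerate walks with $w=z$ by $\Delta$ per edge, whereas the paper goes through the triangle count, using $\sum_{u\to v} c^-_{uv}\le 3T$ and $6T\le 2\Delta m$.
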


\begin{proof}
For a cut whose two parts have sizes $k$ and $n-k$ respectively, Lemma \ref{lem:FNV_thm1.2}
implies that the number of edges in that cut is less than $2^{1/3} (k(n-k))^{2/3} + 16 n$ which is at most $\frac{1}{2} n^{4/3} + 16n$.

Let $\mathcal{P}_i$ denote the set of unordered vertex pairs joined by exactly $i$ paths of length three. Every pair in $\mathcal{P}_i$ with $i\geq 3$ belongs to $\Pi$. Therefore, Lemma~\ref{lem:FNV_collection}(b) gives
$
\sum_{i\geq 3} i|\mathcal{P}_i|
\leq
\sum_{\pi\in\Pi\cup\Pi^{1}}C(\pi)
\leq 35\Delta m.
$
Consequently, $p_3(G) = |\mathcal{P}_1|+2|\mathcal{P}_2|+\sum_{i\geq3}i|\mathcal{P}_i| \leq |\mathcal{P}_1|+2|\mathcal{P}_2|+35 \Delta m \leq n(n-1) - u(G) + 35\Delta m$, where the last inequality follows since $|\mathcal{P}_1|+|\mathcal{P}_2|\leq \binom{n}{2}$ and $|\mathcal{P}_1| = u(G)$.

By Lemma~\ref{lem:FNV_collection}(c),  there is an orientation $\og$ of $G$ such that $\sum_{\pi \in \Pi^{2}} C(\pi) < 43 \Delta |E|$. The forward paths whose end-vertices are joined by exactly one path contribute at most $u(G)$. Every remaining forward path has an endpoint pair in $\Pi^{2}$. Hence
$
f(\og)
\leq
u(G)+\sum_{\pi\in\Pi^{2}}C(\pi)
<u(G)+43\Delta m
$.
Combining this with Lemma \ref{lem:consequences_of_FNV}(b) yields $p_3(G)+f(\og)
\leq n(n-1)+78\Delta m$.

Let $T$ be the number of triangles of $G$. For $uv\in E$, let $c_{uv} = |N(u) \cap N(v)|$. The number of simple paths whose middle edge is $uv$ equals
$(d_u-1)(d_v-1)-c_{uv}$. Since every triangle contributes one common neighbour
to each of its three edges,
\begin{equation}\label{eq:p3count}
  p_3(G)=\sum_{uv \in E}(d_u-1)(d_v-1)-3T .
\end{equation}

For a directed edge $u\to v$, a forward path having $uv$ as its middle edge must be forward from the endpoint adjacent to $u$. Hence it has the form $x\to u\to v-y$, where the orientation of the edge $vy$ may be arbitrary. There are initially $d^{-}_u(d_v-1)$ choices. A choice fails to give a simple path
precisely when $x=y$. Let $c^{-}_{uv}$ be the number of vertices
$x\in N(u)\cap N(v)$ for which $x\to u$. Then
\begin{equation}\label{eq:Fcount}
  f(\og) = \sum_{u \to v} d^{-}_u(d_v  - 1) -\sum_{u \to v} c^{-}_{uv}.
\end{equation}
A path $x-u-v-y$ can be forward from the $x$-endpoint only when the
middle edge is oriented $u\to v$, and it can be forward from the $y$-endpoint
only when the middle edge has the opposite orientation. Thus a forward path is
counted exactly once in \eqref{eq:Fcount}.
Each triangle contributes to at most three terms in $\sum_{u\to v}c^{-}_{uv}$,
and hence $\sum_{u\to v}c^{-}_{uv} \leq 3T$.
Adding \eqref{eq:p3count} and \eqref{eq:Fcount} gives $p_3(G) + f(\og) \geq s(\og) - 6T.$
Moreover, $3T = \sum_{uv \in E}c_{uv} \leq (\Delta - 1)m$ and thus $6T \leq 2 \Delta m$. The result then follows since $p_3(G)+f(\og) \leq n(n-1)+78\Delta m$. 
\qed
\end{proof}

Lemma~\ref{lem:consequences_of_FNV} gives an upper bound for $s(\og)$ when $G$ is $C_6$-free. We now give a lower bound for $s(\og)$ that holds for every
graph of minimum degree at least two. This bound is expressed in terms
of the number of edges and the size of a maximum cut. Comparing these
upper and lower bounds will give the main inequality used later.

\begin{lemma}\label{lem:oriented_lb}
If $G$ is a graph of minimum degree at least two, then every orientation $\og$ of $G$
satisfies
$$
    s(\og)
    \geq
    m\left(\frac{2m}{n}-1\right)^2
    \left(\frac{3}{2}\right)^{1-b(G)/m}.
$$
\end{lemma}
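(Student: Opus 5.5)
The plan is to factor each summand of $s(\og)$ into an undirected part and an orientation-dependent part, and then apply weighted AM--GM (concavity of $\log$) over the $m$ directed edges. For a directed edge $u\to v$ we write
$$
(d_u+d^-_u-1)(d_v-1)=(d_u-1)(d_v-1)\Bigl(1+t_u\Bigr),\qquad t_u=\frac{d^-_u}{d_u-1},
$$
which makes sense because $\delta\ge 2$. Jensen's inequality for $\log$ then gives
$$
\frac{s(\og)}{m}\;\ge\;\exp\Bigl(\frac1m\sum_{u\to v}\bigl[\log(d_u-1)+\log(d_v-1)\bigr]+\frac1m\sum_{u\to v}\log(1+t_u)\Bigr).
$$
It therefore suffices to prove two bounds:
$$
\sum_{u\to v}\bigl[\log(d_u-1)+\log(d_v-1)\bigr]\ge 2m\log\Bigl(\frac{2m}{n}-1\Bigr),\qquad \sum_{u\to v}\log(1+t_u)\ge (m-b(G))\log\frac32 .
$$
Exponentiating these gives exactly $m(2m/n-1)^2(3/2)^{1-b(G)/m}$.

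For the first bound, each vertex $v$ appears in $d_v$ edges, so the left side equals $\sum_v g(d_v)$ with $g(x)=x\log(x-1)$. We have $g''(x)=(x-2)/(x-1)^2\ge 0$ for $x\ge 2$, so $g$ is convex on $[2,\infty)$. Jensen's inequality with average degree $2m/n$ then gives $\sum_v g(d_v)\ge n\,g(2m/n)=2m\log(2m/n-1)$.

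For the second bound, group by tails: $\sum_{u\to v}\log(1+t_u)=\sum_u d^+_u\log(1+t_u)$. Since $\log(1+t)$ is concave and increasing, comparing with the chord from $0$ to $1/2$ gives $\log(1+t)\ge\min(2t,1)\log\frac32$. Because $t_u\ge d^-_u/d_u$, we get
$$
\sum_{u\to v}\log(1+t_u)\ge \log\tfrac32\sum_u w_u,\qquad w_u:=d^+_u\min\Bigl(\frac{2d^-_u}{d_u},1\Bigr).
$$
It remains to show $\sum_u w_u\ge m-b(G)$. This is the key combinatorial step and I expect it to be the main obstacle; the plan is to exhibit one cut that leaves at most $\sum_u w_u$ edges uncut.

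Put $u\in S$ if $d^+_u\ge d^-_u$ and $u\in T$ otherwise. If an edge $u\to v$ does not cross $(S,T)$, then $u\in T$ or $v\in S$. Charge it to $u$ if $u\in T$, and otherwise to $v$, which then lies in $S$. A vertex $u\in T$ receives at most $d^+_u$ charges, one per out-edge. Since $d^-_u>d_u/2$, we have $w_u=d^+_u$, so its charge is at most $w_u$. A vertex $v\in S$ receives at most $d^-_v$ charges, one per in-edge. Here $d^-_v\le d_v/2$, so $w_v=2d^+_vd^-_v/d_v\ge d^-_v$ because $d^+_v\ge d_v/2$. Hence $m-b(G)\le m-e(S,T)\le\sum_u w_u$, which completes the argument.
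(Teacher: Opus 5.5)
Your proof is correct and follows essentially the same route as the paper: AM--GM over the directed edges, Jensen for the convex function $x\log(x-1)$, an elementary lower bound on the orientation factor in terms of $\log(3/2)$, and a cut argument producing $m-b(G)$. The differences are cosmetic. Your chord bound $\log(1+t)\ge\min(2t,1)\log(3/2)$ and your charging argument replace the paper's inequality $(1-t)\log(1+t)\ge\log(3/2)\min\{t,1-t\}$ and its identity $\sum_v\min\{d^-_v,d^+_v\}=m-\sum_{v\in I}(d^+_v-d^-_v)$ with $I=\{v:d^+_v>d^-_v\}$, and your weights satisfy $w_u\ge\min\{d^-_u,d^+_u\}$, so both routes reach the same bound.
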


\begin{proof}
Since $s(\og)=\sum_{u\to v}(d_u+d^{-}_u-1)(d_v-1)$ and all the summands are positive, the AM--GM inequality gives
\begin{align}
    \log\frac{s(\og)}{m}
    &\geq
    \frac{1}{m}\sum_{u\to v}
    \log\bigl[(d_u+d^{-}_u-1)(d_v-1)\bigr]
    \notag\\
    &=
    \frac{1}{m}\sum_{v\in V}
    \bigl[d^{+}_v\log(d_v+d^{-}_v-1)+d^{-}_v\log(d_v-1)\bigr].
    \label{eq:AMGM}
\end{align}

Since $d_v\geq 2$, we have
\begin{align*}
    &d^{+}_v\log(d_v+d^{-}_v-1)+d^{-}_v\log(d_v-1)\\
    &\qquad=
    d_v\log(d_v-1)
    +d^{+}_v\log\left(1+\frac{d^{-}_v}{d_v-1}\right)\\
    &\qquad\geq
    d_v\log(d_v-1)
    +d_v\left(1-\frac{d^{-}_v}{d_v}\right)
    \log\left(1+\frac{d^{-}_v}{d_v}\right)\\
    &\qquad\geq
    d_v\log(d_v-1)
    +\log\left(\frac{3}{2}\right)\min\{d^{-}_v,d^{+}_v\},
\end{align*}
where the last inequality is because
$
    (1-t)\log(1+t)
    \geq
    \log\left(\frac{3}{2}\right)\min\{t,1-t\},
$ for every $t\in[0,1]$. For $t\in[1/2,1]$, this immediately follows since $\log(1+t)\geq\log(3/2)$. For $t\in[0,1/2]$, the difference
$
    (1-t)\log(1+t)-t\log(3/2)
$
is concave and vanishes at both endpoints, and is therefore nonnegative.

As the function $x \mapsto x\log(x-1)$ is convex on $[2,\infty)$,
Jensen's inequality gives
\begin{equation}\label{eq:Jensen}
    \sum_{v\in V}d_v\log(d_v-1)
    \geq
    2m\log\left(\frac{2m}{n}-1\right).
\end{equation}

Now, let
$I=\{v\in V:d^{+}_v>d^{-}_v\}$. Since
$\sum_{v\in V}d^{-}_v=\sum_{v\in V}d^{+}_v=m$, we have $\sum_{v\in V}\min\{d^{-}_v,d^{+}_v\}
    =m-\frac{1}{2}\sum_{v\in V}|d^{-}_v-d^{+}_v|
    =m-\sum_{v\in I}(d^{+}_v-d^{-}_v)$. Note that $\sum_{v\in I}(d^{+}_v-d^{-}_v)$ is the difference between the number of directed edges from $I$ to $V\setminus I$
and the number of directed edges from $V\setminus I$ to $I$. Therefore,
it is at most the number of edges in the cut $(I,V\setminus I)$,
which is at most $b(G)$. Hence $\sum_{v\in V}\min\{d^{-}_v,d^{+}_v\}\geq m-b(G)$.

Combining this inequality with \eqref{eq:AMGM} and \eqref{eq:Jensen}, we obtain 
$$
    \log\frac{s(\og)}{m}
    \geq
    2\log\left(\frac{2m}{n}-1\right)
    +\left(1-\frac{b(G)}{m}\right)
    \log\left(\frac{3}{2}\right). 
$$  \qed
\end{proof}

The following inequality immediately follows from Lemma~\ref{lem:consequences_of_FNV} and Lemma~\ref{lem:oriented_lb}.

\begin{lemma}
\label{lem:main}
If $G$ is a $C_6$-free graph of minimum degree at least two, then 
$$
  m \left(\frac{2m}{n}-1 \right)^{2}\left(\frac{3}{2} \right)^{1-b_n/m} \leq n(n-1) + 80 \Delta m,  
$$
where $b_n = \frac{1}{2} n^{4/3} + 16n$.
\end{lemma}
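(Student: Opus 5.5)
The plan is to chain the two previous lemmas through the orientation supplied by Lemma~\ref{lem:consequences_of_FNV}(c), and then replace $b(G)$ by $b_n$ using monotonicity.

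\textbf{Step 1 (choose the orientation).} Since $G$ is $C_6$-free, Lemma~\ref{lem:consequences_of_FNV}(c) provides an orientation $\og$ of $G$ with $s(\og)\leq n(n-1)+80\Delta m$.

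\textbf{Step 2 (apply the lower bound).} Because $G$ has minimum degree at least two, Lemma~\ref{lem:oriented_lb} holds for every orientation, and in particular for this $\og$. It gives
\[
m\left(\frac{2m}{n}-1\right)^2\left(\frac32\right)^{1-b(G)/m}\leq s(\og)\leq n(n-1)+80\Delta m .
\]

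\textbf{Step 3 (replace $b(G)$ by $b_n$).} By Lemma~\ref{lem:consequences_of_FNV}(a), $b(G)\leq b_n$. Hence $1-b(G)/m\geq 1-b_n/m$. Since the base $3/2$ exceeds $1$, the map $x\mapsto (3/2)^x$ is increasing, so $(3/2)^{1-b_n/m}\leq(3/2)^{1-b(G)/m}$. The prefactor $m(2m/n-1)^2$ is nonnegative, and this is the only point that needs care: it does not matter whether $1-b_n/m$ is negative, only that the prefactor is nonnegative. Multiplying through therefore lowers the left-hand side, and the stated inequality follows.

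No step here is a real obstacle; the substance lies entirely in the two earlier lemmas.
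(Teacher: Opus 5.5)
Your proposal is correct and is the paper's argument: the paper says only that the lemma follows immediately from Lemma~\ref{lem:consequences_of_FNV} and Lemma~\ref{lem:oriented_lb}. You spell out the chaining through the orientation given by part (c), and the replacement of $b(G)$ by $b_n$ via part (a) together with the monotonicity of $x\mapsto(3/2)^x$ and the nonnegativity of the prefactor, which the paper leaves implicit.
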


To apply Lemma~\ref{lem:main}, we must control the maximum degree \(\Delta\). By Lemma~\ref{lem:FNV_collection}(a), this follows once we obtain a sufficiently large minimum degree. We achieve this by passing to an appropriate induced subgraph via the following argument. Fix $a>0$ and $b\geq 0$. For any graph $G=(V(G),E(G))$, define
$$
    \Psi_{a,b}(G)
    =
    |E(G)|-a|V(G)|^{4/3}-b|V(G)|.
$$

\begin{lemma}
\label{lem:induced_subgraph}
Let $G$ be a graph with $\Psi_{a,b}(G)>0$, and let $H$ be an induced
subgraph of $G$ maximizing $\Psi_{a,b}$. Then $H$ is nonempty.
Moreover, every vertex of $H$ has degree at least
$$
    a\left(|V(H)|^{4/3}
    -\bigl(|V(H)|-1\bigr)^{4/3}\right)+b.
$$
Furthermore, if $(G_N)_{N\geq 0}$ is a sequence of graphs such that
$\Psi_{a,b}(G_N)\to\infty$, and $H_N$ is an induced subgraph of $G_N$
maximizing $\Psi_{a,b}$, then $|V(H_N)|\to\infty$.
\end{lemma}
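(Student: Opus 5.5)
The plan is a standard vertex-deletion maximality argument, carried out in three steps.

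\emph{Nonemptiness.} For the empty graph (no vertices) we have $\Psi_{a,b}=0$. Since $G$ is an induced subgraph of itself with $\Psi_{a,b}(G)>0$, the maximum over induced subgraphs is strictly positive. Hence the maximizer $H$ cannot be empty. The same observation gives $\Psi_{a,b}(H)\geq \Psi_{a,b}(G)$, which we will use in the last step.

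\emph{Minimum degree.} Write $h=|V(H)|\geq 1$ and fix $v\in V(H)$. The graph $H-v$ is again an induced subgraph of $G$, so $\Psi_{a,b}(H-v)\leq \Psi_{a,b}(H)$. This holds even when $h=1$, where $H-v$ is the empty graph. Expanding the definition gives
$$
|E(H)|-d_H(v)-a(h-1)^{4/3}-b(h-1)\leq |E(H)|-ah^{4/3}-bh.
$$
Rearranging yields $d_H(v)\geq a\bigl(h^{4/3}-(h-1)^{4/3}\bigr)+b$, which is exactly the claimed bound.

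\emph{Growth of $|V(H_N)|$.} This is the only step needing any thought. Since $|E(H_N)|\leq \binom{|V(H_N)|}{2}\leq |V(H_N)|^2$ and $a>0$, $b\geq 0$, we get
$$
\Psi_{a,b}(G_N)\leq \Psi_{a,b}(H_N)\leq |V(H_N)|^2-a|V(H_N)|^{4/3}\leq |V(H_N)|^2 .
$$
So $|V(H_N)|\geq \Psi_{a,b}(G_N)^{1/2}\to\infty$. Here the maximality of $H_N$ transfers the divergence of $\Psi_{a,b}(G_N)$ to $H_N$, and the trivial edge bound converts it into divergence of the vertex count. Nothing beyond the definitions is needed, and I do not expect a genuine obstacle.
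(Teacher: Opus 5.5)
Your proposal is correct and follows essentially the same argument as the paper: you compare with the empty induced subgraph for nonemptiness, compare $H$ with $H-v$ for the degree bound, and use the chain $\Psi_{a,b}(G_N)\leq\Psi_{a,b}(H_N)\leq\binom{|V(H_N)|}{2}$ for the growth statement. Your explicit bound $|V(H_N)|\geq\Psi_{a,b}(G_N)^{1/2}$ is just a more quantitative form of the paper's last step.
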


\begin{proof}
Since $\Psi_{a,b}(H) \geq \Psi_{a,b}(G)>0=\Psi_{a,b}(\emptyset)$,
the maximizing subgraph $H$ is nonempty. For any $v\in V(H)$, the
maximality of $\Psi_{a,b}(H)$ gives $\Psi_{a,b}(H)\geq\Psi_{a,b}(H-v)$. Therefore, $$
    |E(H)|-a|V(H)|^{4/3}-b|V(H)|
    \geq
    |E(H-v)|-a\bigl(|V(H)|-1\bigr)^{4/3}
    -b\bigl(|V(H)|-1\bigr).$$
Since $|E(H)|-|E(H-v)|=d_H(v)$,
it follows that $$
    d_H(v)
    \geq
    a\left(|V(H)|^{4/3}
    -\bigl(|V(H)|-1\bigr)^{4/3}\right)+b.$$

For the last assertion, the choice of $H_N$ gives $\Psi_{a,b}(H_N) \geq \Psi_{a,b}(G_N) \longrightarrow\infty$. On the other hand, since $a>0$ and $b\geq 0$,
$$
    \Psi_{a,b}(H_N)
    \leq
    |E(H_N)|
    \leq
    \binom{|V(H_N)|}{2}.
$$
Consequently, $|V(H_N)|\to\infty$.
\qed
\end{proof}

\medskip
\noindent\textit{Proof of Theorem~\ref{thm:main}. } 
The function
$h(t)= 4t^3(3/2)^{1-1/(2t)}$ is strictly increasing, for
$t\geq 1/2$. Moreover, its value at $t=1/2$ is $1/2$, while its
value at $t=2/3$ is greater than $1$. Hence there is a unique
$\alpha\in(1/2,2/3)$ such that
$4\alpha^3(3/2)^{1-1/(2\alpha)}=1$. This is the constant appearing
in Theorem~\ref{thm:main}.

For $n\in\mathbb{N}$, let $b_n=\frac{1}{2}n^{4/3}+16n$ and define
$$
F_n(x)=x(2x/n-1)^2(3/2)^{1-b_n/x}.
$$
Note that $F_n$ is strictly increasing on $(n/2,\infty)$.

Fix a sufficiently large constant $c>0$ satisfying
\begin{equation*}
    c\left(
        \frac{3}{\alpha}
        +\frac{\log(3/2)}{2\alpha^2}
    \right)
    -\frac{1+16\log(3/2)}{\alpha}
    >
    \frac{11520}{\alpha^2}+1.
\end{equation*}

Suppose, for a contradiction, that there are arbitrarily large
integers $N$ for which a $C_6$-free graph $G_N$ on $N$ vertices
satisfies $|E(G_N)|\geq\alpha N^{4/3}+2cN$. Apply
Lemma~\ref{lem:induced_subgraph} with $a=\alpha$ and $b=c$, and let $H_N$ be an
induced subgraph of $G_N$ maximizing $\Psi_{\alpha,c}$. For
simplicity, let $H=H_N$ and $n_h=|V(H)|$. Since
$\Psi_{\alpha,c}(G_N)
=|E(G_N)|-\alpha N^{4/3}-cN
\geq cN\to\infty$,
Lemma~\ref{lem:induced_subgraph} gives $n_h\to\infty$. It also gives
$\delta(H)
    \geq
    \alpha\left(
        n_h^{4/3}-(n_h-1)^{4/3}
    \right)+c$. In particular, $\delta(H)\geq2$ for all sufficiently large $N$.

By the mean value theorem,
$n_h^{4/3}-(n_h-1)^{4/3}
\geq\frac{4}{3}(n_h-1)^{1/3}
\geq n_h^{1/3}$ for $n_h\geq2$. Hence, for all sufficiently large
$N$, we have
$\delta(H)-4
\geq\alpha n_h^{1/3}-4
\geq\frac{2\alpha}{3}n_h^{1/3}$.
It follows from Lemma~\ref{lem:FNV_collection}(a) that
$\Delta(H)\leq(144/\alpha^2)n_h^{1/3}$.

By the upper bound of \eqref{eq:FNVbound},
$|E(H)|<n_h^{4/3}$ for all sufficiently large $N$. Therefore,
$80\Delta(H)|E(H)|
\leq(11520/\alpha^2)n_h^{5/3}$. Furthermore,
$\Psi_{\alpha,c}(H)\geq\Psi_{\alpha,c}(G_N)>0$, and hence
$|E(H)|>\alpha n_h^{4/3}+cn_h$. Put
$x=\alpha n_h^{4/3}+cn_h$. Since $\alpha>1/2$, we have $x>n_h/2$.
Therefore, Lemma~\ref{lem:main} and the monotonicity of $F_{n_h}$
give
\begin{align*}
    F_{n_h}(x)
    &\leq
    F_{n_h}\bigl(|E(H)|\bigr)\\
    &\leq
    n_h(n_h-1)+80\Delta(H)|E(H)|\\
    &\leq
    n_h(n_h-1)
    +\frac{11520}{\alpha^2}n_h^{5/3}.
\end{align*}

On the other hand, substituting
$x=\alpha n_h^{4/3}+cn_h$ into the definition of $F_{n_h}$ gives
\begin{align*}
    \frac{F_{n_h}(x)}{n_h^2}
    &=\Upsilon(n_h^{-1/3}),
\end{align*}
where
\begin{align*}
    \Upsilon (s)=\left(\alpha+cs\right)
    \left(
        2\left(\alpha+cs\right)-s
    \right)^2
        \exp\left[
        \log\left(\frac{3}{2}\right)
        \left(
            1-
            \frac{1/2+16s}
                 {\alpha+cs}
        \right)
    \right]
\end{align*}
Note that $\Upsilon(0)=4\alpha^3(3/2)^{1-1/(2\alpha)}=1$. Differentiating $\Upsilon(s)$ at $s=0$ gives $$\Upsilon^{'}(0)=c\left(
            \frac{3}{\alpha}
            +\frac{\log(3/2)}{2\alpha^2}
        \right)
        -\frac{1+16\log(3/2)}{\alpha}
    $$ Hence by Taylor's expansion we get, 
\begin{align*}
    \frac{F_{n_h}(x)}{n_h^2}=\Upsilon(n_h^{-1/3})
    &=1+
    \left[
        c\left(
            \frac{3}{\alpha}
            +\frac{\log(3/2)}{2\alpha^2}
        \right)
        -\frac{1+16\log(3/2)}{\alpha}
    \right]n_h^{-1/3} +O\left(n_h^{-2/3}\right)
\end{align*} 

By the choice of $c$, for all sufficiently large $N$ this implies
$F_{n_h}(x)>
n_h^2+(11520/\alpha^2+1/2)n_h^{5/3}$, a contradiction. Therefore, $\ex(n,C_6) \leq \alpha n^{4/3}+2cn$ for all sufficiently large $n$, which completes the proof of Theorem~\ref{thm:main}.
\qed

\section{Conclusions}

We improve the upper bound for the Tur\'an number of the hexagon to
$$
\ex(n,C_6)\leq (0.614314\ldots)\, n^{4/3}+O(n).
$$
Together with the construction of F\"uredi, Naor, and
Verstra\"ete~\cite{FNV2006}, this leaves a gap between the best known
lower and upper bounds for the leading constant of $\ex(n,C_6)$.

A natural direction for further work is to determine whether
$\ex(n,C_6)/n^{4/3}$ has a limit and, if so, to determine its value.
It would also be interesting to study whether the path-counting
and orientation arguments used here can be strengthened further,
or adapted to obtain improved upper bounds for $C_{10}$ and other even
cycles.

\bibliographystyle{splncs04}
\bibliography{ref}

\end{document}